\documentclass[twoside]{article}

\usepackage{arxiv}

\usepackage[utf8]{inputenc}
\usepackage[T1]{fontenc}
\usepackage{url}
\usepackage{booktabs}
\usepackage{natbib}
\usepackage{amsmath}
\usepackage{amsfonts}
\usepackage{amsthm}
\usepackage{microtype}
\usepackage{graphicx}
\usepackage{capt-of}
\usepackage{algorithm}
\usepackage{algpseudocode}
\usepackage{amsopn}
\usepackage{hyperref}
\usepackage{cleveref}

\DeclareMathOperator{\rank}{rank}

\DeclareMathOperator{\Sym}{Sym}
\DeclareMathOperator{\tr}{tr}

\newtheorem{theorem}{Theorem}
\newtheorem{proposition}[theorem]{Proposition}
\newtheorem{corollary}[theorem]{Corollary}

\theoremstyle{definition}
\newtheorem{definition}[theorem]{Definition}
\newtheorem{remark}[theorem]{Remark}

\crefname{theorem}{theorem}{theorems}
\crefname{proposition}{proposition}{propositions}
\crefname{corollary}{corollary}{corollaries}
\crefname{definition}{definition}{definitions}
\crefname{remark}{remark}{remarks}
\crefname{algorithm}{algorithm}{algorithms}

\hypersetup{hidelinks}

\newcommand{\Hnd}{\mathcal{H}_{n,d}}
\newcommand{\Fn}{\mathcal{F}_{n,d,r}}
\newcommand{\Vn}{\mathcal{V}_{n,d,r}}

\shorttitle{Algebraic Expressivity Certificates}
\shortauthor{Akbari,~~Jamshidi}

\title{Algebraic Expressivity Certificates for Shallow Polynomial Neural Networks}

\author{
 Sepehr Akbari \\
  Dept. of Math \& Computer Science\\
  Lake Forest College\\
  \texttt{akbaris79@lakeforest.edu}
  \And
  Shahrzad Jamshidi \\
  Dept. of Math \& Computer Science\\
  Lake Forest College\\
  \texttt{sjamshidi@lakeforest.edu}
}

\begin{document}
\maketitle

\begin{abstract}
We study exact representability by bias-free shallow polynomial neural networks using algebraic geometry. Over $\mathbb{C}$, a width-$r$ network with activation $z\mapsto z^d$ computes a sum of $r$ $d$-th powers of linear forms, whose Zariski closure is a Veronese secant variety. Ideal elimination therefore yields polynomial certificates of nonrepresentability. We implement this construction as a generic architecture-to-certificate pipeline. For quadratics, we recover the exact symmetric determinantal description and explain its dimension through orthogonal symmetry. In higher degree, the implementation recovers classical catalecticant and secant equations and maps the practical reach of direct elimination across a finite architecture sweep. We also derive the exact population loss floor for a rank-two quadratic network on the sphere, illustrating how an algebraic obstruction induces irreducible approximation error.
\end{abstract}

\keywords{polynomial neural networks, algebraic geometry, expressivity certificates, Veronese secant varieties, Waring rank}

\section{Introduction}
\label{sec:introduction}

Universal approximation does not describe the exact function class of a fixed finite network. Moreover, the standard shallow universal-approximation theorem requires a non-polynomial activation, together with its usual regularity and bias assumptions \citep{Leshno1993}. A fixed activation $z^d$ therefore does not become universal merely by increasing width. In the bias-free model, every output remains a homogeneous polynomial of degree $d$. The question here is which degree-$d$ forms are represented by a prescribed width?

This distinction matters even when exact polynomial representation is not itself the learning objective. An observed residual may reflect optimization, finite data, or an architectural obstruction. Ordinary asymptotic approximation statements do not isolate the last possibility at a fixed width. Polynomial parameterizations make that obstruction accessible to exact equations. A violated invariant can prove that a target lies outside the architecture, independently of how the weights were optimized. This does not reduce a general learning problem to polynomial membership; it supplies a precise diagnostic for the polynomial model.

For a monomial activation, the weights map polynomially to output coefficients, identifying shallow networks with symmetric tensor decompositions. Our foundational reference is classical tools of algebraic geometry, including Waring decompositions, Veronese secant varieties, Alexander--Hirschowitz dimension theory, symmetric determinantal ideals, and catalecticants \citep{IarrobinoKanev1999,Landsberg2012,BrambillaOttaviani2008,Jozefiak1978}. Within the literature, polynomial networks have also been studied through algebraic geometry \citep{Kileel2019}, along with the real geometry of neuromanifolds and their Zariski closures \citep{Kubjas2024}, and distribution-dependent optimization of shallow polynomial networks \citep{Arjevani2026}.

Against this background, we first make the image--closure distinction explicit for shallow polynomial networks, since it determines exactly what polynomial equations can certify. Elimination of the graph ideal~\footnote{We use the term graph-ideal elimination since we perform elimination on the ideal generated from the constructed graph in \cref{sec:algebraic}, rather than an arbitrary ideal in the coefficient space.} computes equations of the border-rank locus; a violated equation proves nonrepresentability, whereas simultaneous vanishing generally does not decide exact Waring rank. In the quadratic case, where bounded rank is closed, we give a self-contained determinantal characterization and connect its dimension formula to the symmetry $V\mapsto QV$ with $Q\in O(r,\mathbb{C})$. This makes the $\binom r2$-dimensional parameter redundancy visible directly at the network level. The result is a precise certificate interpretation anchored in the geometry of symmetric tensors and Veronese secant varieties \citep{IarrobinoKanev1999,Landsberg2012,Jozefiak1978}.

We then implement graph-ideal elimination as a general pipeline for any specified $(n,d,r)$, together with a target evaluator that turns a violated equation into a certificate. We summarize the recorded computations in a machine-readable atlas and compare every completed case with the quadratic dimension formula or the Alexander--Hirschowitz theorem \citep{BrambillaOttaviani2008}. The pipeline recovers structured classical ideals in representative cases, including the binary-quartic catalecticant determinant and the maximal-minor ideal of the $(3,3,2)$ ternary-cubic architecture; the latter equality is checked exactly over $\mathbb{Q}$. Finally, we specialize the distribution-induced metric studied for shallow polynomial networks \citep{Arjevani2026} to the recorded sphere experiment and derive its exact population loss floor. These pieces connect the algebraic description of expressivity to an executable certificate procedure and to a concrete approximation barrier.

\section{Algebraic formulation}
\label{sec:algebraic}

We begin by translating the network into a polynomial map on the coefficient space. This separates exact representability from the algebraic closure on which implicit equations naturally live. Let
\[
    \Hnd:=\mathbb{C}[x_1,\ldots,x_n]_d
    \simeq \Sym^d((\mathbb{C}^n)^*)
\]
be the $N=\binom{n+d-1}{d}$ dimensional space of homogeneous degree-$d$ forms. A width-$r$ network with scalar output weights computes
\begin{equation}
    f_{a,W}(x)=\sum_{i=1}^r a_i(w_i^\top x)^d.
    \label{eq:signed-network}
\end{equation}
Over $\mathbb{C}$, choose a $d$-th root of each $a_i$ and absorb it into $w_i$. The complex function class is therefore parameterized by
\begin{equation}
    \phi_{n,d,r}:(\mathbb{C}^n)^r\longrightarrow \Hnd,
    \qquad
    (v_1,\ldots,v_r)\longmapsto \sum_{i=1}^r(v_i^\top x)^d.
    \label{eq:parameterization}
\end{equation}
Here $v_i^\top x=\sum_jv_{ij}x_j$, without any complex conjugation used. If $\alpha\in\mathbb{N}^n$ satisfies $|\alpha|=d$ and $P=\sum_{|\alpha|=d}c_\alpha x^\alpha$, then
\begin{equation}
    c_\alpha=\binom{d}{\alpha}\sum_{i=1}^r v_i^\alpha.
    \label{eq:coefficient-map}
\end{equation}

\begin{definition}[Image and closure]
The actual complex network class and its Zariski closure are
\begin{equation}
    \Fn:=\operatorname{Im}(\phi_{n,d,r}),
    \qquad
    \Vn:=\overline{\Fn}^{\,Z}\subseteq\Hnd.
    \label{eq:image-closure}
\end{equation}
Thus $\Fn$ is the locus of complex symmetric (Waring) \textit{rank} at most $r$, whereas $\Vn$ is the locus of complex symmetric \textit{border rank} at most $r$.
\end{definition}

\begin{proposition}[Secant-variety interpretation]
\label{prop:secant}
Let $\nu_d:\mathbb{P}^{n-1}\to\mathbb{P}^{N-1}$ be the degree-$d$ Veronese embedding. Then $\Vn$ is the affine cone over $\sigma_r(\nu_d(\mathbb{P}^{n-1}))$, the $r$-th secant variety of the Veronese variety.
\end{proposition}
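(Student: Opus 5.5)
The plan is to unwind the definitions on both sides and compare two closed sets. First I fix the standard definition: $\sigma_r(X)$ for a projective variety $X\subseteq\mathbb{P}^{N-1}$ is the Zariski closure of the union of the linear spans $\langle p_1,\ldots,p_r\rangle$ over all $p_1,\ldots,p_r\in X$. Its affine cone $\widehat{\sigma}_r\subseteq\Hnd$ is the union of $\{0\}$ with all lines over points of $\sigma_r$. Equivalently, it is the closure of the set $S_r:=\{\lambda_1 y_1+\cdots+\lambda_r y_r:\ y_i\in\widehat X,\ \lambda_i\in\mathbb{C}\}$, where $\widehat X$ is the affine cone over $X$. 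I will use this equivalence, which holds because the cone over a closure is the closure of the cone.

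The first step identifies $\widehat{\nu_d(\mathbb{P}^{n-1})}$ with the set of pure powers $\{(v^\top x)^d: v\in\mathbb{C}^n\}$. The Veronese map sends $[v]$ to $[(v^\top x)^d]$, with coordinates given by \eqref{eq:coefficient-map} for $r=1$. The nonzero multiples of $(v^\top x)^d$ are $\lambda (v^\top x)^d=((\lambda^{1/d}v)^\top x)^d$, since $\mathbb{C}$ is algebraically closed and a $d$-th root $\lambda^{1/d}$ exists. Together with $v=0$, this shows that the affine cone is exactly the set of $d$-th powers of linear forms. Applying the same root-absorption trick termwise to $S_r$ shows that $S_r=\Fn=\operatorname{Im}\phi_{n,d,r}$. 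This is the same argument the paper uses to pass from \eqref{eq:signed-network} to \eqref{eq:parameterization}.

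The second step compares the closures. By definition $\Vn=\overline{\Fn}^{\,Z}=\overline{S_r}$, so it remains to justify that $\overline{S_r}$ equals the affine cone over $\sigma_r(\nu_d(\mathbb{P}^{n-1}))$. The set $S_r$ is stable under scaling, so its Zariski closure is a closed cone. Such a cone is defined by a homogeneous ideal, because the ideal of a $\mathbb{C}^*$-stable set is graded. Its projectivization is therefore a projective variety containing every span $\langle p_1,\ldots,p_r\rangle$, and hence it contains $\sigma_r$. Conversely, the projectivization of $S_r\setminus\{0\}$ is the union of the spans, and the closure of a cone projectivizes to the closure of its projectivization. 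This gives the reverse inclusion. The main point requiring care is this cone/closure compatibility, since the rest is bookkeeping. I would handle it by noting that for a homogeneous polynomial $F$, vanishing on $S_r$ is equivalent to vanishing on the union of the spans. Consequently both closed sets are cut out by the same homogeneous ideal.

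Finally, I would note that irreducibility of $\Vn$ follows from its being the closure of the image of the irreducible space $(\mathbb{C}^n)^r$. This is consistent with $\sigma_r$ being a variety, but it is not needed for the set-theoretic identification.
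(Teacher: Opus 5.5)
Your proposal is correct and follows the same route as the paper. You identify the affine cone over $\nu_d(\mathbb{P}^{n-1})$ with the pure powers $(v^\top x)^d$, note that sums of $r$ of them give $\Fn$, and pass to Zariski closures, while making explicit the $d$-th-root absorption and the cone/closure compatibility (the ideal of a scaling-stable set is homogeneous) that the paper leaves to its citations.
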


\begin{proof}
The cone over the Veronese variety consists of pure powers of linear forms. Sums of $r$ such powers give $\Fn$, and the secant variety is the Zariski closure of their projectivization \citep{IarrobinoKanev1999,Landsberg2012}.
\end{proof}

The closure in \cref{eq:image-closure} is important. For $d\geq3$, bounded-rank symmetric tensors do not need to form a closed set \citep{Comon2008,Arjevani2026}. Consequently, equations of $\Vn$ certify nonmembership in $\Fn$, but their simultaneous vanishing does not generally produce an exact width-$r$ decomposition.

To pass from the parameterization to equations in coefficient space, we eliminate the weights from the graph of $\phi_{n,d,r}$. Write the coordinates of \cref{eq:parameterization} as $\phi_1(v),\ldots,\phi_N(v)$ and introduce coefficient variables $c=(c_1,\ldots,c_N)$. We can then define the graph ideal
\begin{equation}
    J=\langle c_j-\phi_j(v):1\leq j\leq N\rangle
    \subset\mathbb{C}[v,c].
    \label{eq:graph-ideal}
\end{equation}

\begin{proposition}[Graph-ideal elimination]
\label{prop:elimination}
For any polynomial map $\phi:\mathbb{C}^m\to\mathbb{C}^N$ and the graph ideal in \cref{eq:graph-ideal},
\[
    J\cap\mathbb{C}[c]
    =I\!\left(\overline{\operatorname{Im}\phi}^{\,Z}\right).
\]
In particular, a Gr\"obner basis for $J$ in an elimination order yields generators for $I(\Vn)$.
\end{proposition}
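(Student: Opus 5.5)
The plan is to prove the two inclusions separately, working throughout over the algebraically closed field $\mathbb{C}$. Write $X=\overline{\operatorname{Im}\phi}^{\,Z}$ and $\Gamma=V(J)\subseteq\mathbb{C}^m\times\mathbb{C}^N$. The first observation is that $\Gamma$ is exactly the graph $\{(v,\phi(v))\}$, and that the coordinate projection $\pi:\Gamma\to\mathbb{C}^N$ has image $\operatorname{Im}\phi$.

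For the inclusion $J\cap\mathbb{C}[c]\subseteq I(X)$, take $g\in J\cap\mathbb{C}[c]$ and write $g=\sum_j h_j(v,c)(c_j-\phi_j(v))$. Substituting $c=\phi(v)$ gives $g(\phi(v))=0$ for every $v$. Hence $g$ vanishes on $\operatorname{Im}\phi$. Since $I(\operatorname{Im}\phi)=I(X)$ (the vanishing ideal of a set equals that of its Zariski closure), we get $g\in I(X)$.

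For the reverse inclusion, take $g\in\mathbb{C}[c]$ vanishing on $X$. Then $g(\phi(v))=0$ identically as a polynomial in $v$, because it vanishes on all of $\mathbb{C}^m$, which is an infinite field's affine space. The key step is the substitution identity in $\mathbb{C}[v,c]$:
\[
g(c)-g(\phi(v))\in\langle c_j-\phi_j(v)\rangle=J.
\]
This holds because $c_j\equiv\phi_j(v)$ modulo $J$ and reduction modulo an ideal is a ring homomorphism. Equivalently, $\mathbb{C}[v,c]/J\cong\mathbb{C}[v]$ via $c_j\mapsto\phi_j(v)$. Thus $g=g(c)-g(\phi(v))\in J$, so $g\in J\cap\mathbb{C}[c]$.

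Notably, no Nullstellensatz is needed: $J$ is prime (its quotient is a polynomial ring), hence radical, and the isomorphism handles everything directly. This is the only point requiring care, since naive elimination gives only $V(J\cap\mathbb{C}[c])=\overline{\pi(\Gamma)}$ at the level of sets. Here we need equality of ideals, and that comes from the identification of the quotient.

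Finally, the Gröbner statement follows from the Elimination Theorem. If $G$ is a Gröbner basis of $J$ for an elimination order with $v>c$, then $G\cap\mathbb{C}[c]$ is a Gröbner basis of $J\cap\mathbb{C}[c]$. Applying the result with $\phi=\phi_{n,d,r}$ and $X=\Vn$ gives generators for $I(\Vn)$.

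One caveat concerns coefficients. If the computation is done over $\mathbb{Q}$, Gröbner bases commute with field extension, so the generators are still valid over $\mathbb{C}$.
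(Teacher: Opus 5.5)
Your proof is correct and follows the paper's route. The paper simply asserts that $J\cap\mathbb{C}[c]$ is the kernel of the substitution map $c_j\mapsto\phi_j(v)$, and that this kernel consists exactly of the polynomials vanishing on $\operatorname{Im}\phi$; your two inclusions, in particular the identity $g(c)-g(\phi(v))\in J$, are precisely the verification of that kernel claim, and your closing remark about computing over $\mathbb{Q}$ is a small but genuine addition, since the paper runs the elimination over $\mathbb{Q}$ without comment.
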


\begin{proof}
The substitution homomorphism $\psi:\mathbb{C}[c]\to\mathbb{C}[v]$, $c_j\mapsto\phi_j(v)$, has kernel $J\cap\mathbb{C}[c]$. A polynomial lies in this kernel exactly when it vanishes on $\operatorname{Im}\phi$, hence exactly when it vanishes on its Zariski closure. The computational statement is the elimination theorem \citep{Cox2015}.
\end{proof}

\begin{corollary}[Polynomial nonmembership certificate]
\label{cor:certificate}
If $g\in I(\Vn)$ and $g(P)\neq0$, then
\[
    P\notin\Vn\quad\Longrightarrow\quad P\notin\Fn.
\]
The converse is not necessarily true.
\end{corollary}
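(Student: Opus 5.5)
The forward implication follows directly from the definitions. Since $g\in I(\Vn)$, the polynomial $g$ vanishes at every point of $\Vn$. If $g(P)\neq0$, then $P$ cannot lie in $\Vn$. The inclusion $\Fn\subseteq\Vn$ holds because the Zariski closure of a set contains the set, so $P\notin\Vn$ gives $P\notin\Fn$. By \cref{prop:elimination}, any element of $J\cap\mathbb{C}[c]$ is a valid choice of $g$, so certificates can be read off an elimination Gr\"obner basis. The first part therefore needs no computation beyond these two inclusions.

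To show the converse can fail, I will read the remark in the stronger sense that vanishing of all equations need not give membership in $\Fn$. This requires a point of $\Vn\setminus\Fn$, that is, a form of border rank at most $r$ but Waring rank greater than $r$. I will use the standard example with $n=2$, $d=3$, $r=2$ and $P=x_1^2x_2$. For the border-rank bound, note that
\[
\tfrac{1}{3\varepsilon}\bigl((x_1+\varepsilon x_2)^3-x_1^3\bigr)\longrightarrow x_1^2x_2
\quad\text{as }\varepsilon\to0 .
\]
Each term on the left is a sum of two cubes after absorbing the complex scalars into the linear forms, as in the passage from \cref{eq:signed-network} to \cref{eq:parameterization}. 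The Euclidean limit of points of $\Fn$ lies in the Zariski closure, so $P\in\Vn$. Consequently every $g\in I(\Vn)$ satisfies $g(P)=0$, and no certificate from \cref{cor:certificate} can exclude $P$.

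The main obstacle is proving that $P\notin\Fn$, i.e. that $x_1^2x_2$ is not a sum of two cubes of linear forms. I will use the apolarity or catalecticant argument. Suppose $P=\ell_1^3+\ell_2^3$. If $\ell_1,\ell_2$ are proportional, then $P$ is a single cube, so every first partial derivative of $P$ is a multiple of $\ell_1^2$. This is false, since $\partial_{x_2}P=x_1^2$ and $\partial_{x_1}P=2x_1x_2$ are not both multiples of one square. If $\ell_1,\ell_2$ are independent, a linear change of variables makes them $y_1,y_2$, so $P\simeq y_1^3+y_2^3$. The Hessian determinant transforms by a nonzero scalar factor under such a change. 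For $y_1^3+y_2^3$ it equals $36\,y_1y_2$, which has two distinct linear factors. For $x_1^2x_2$ it equals $-4x_1^2$, a square of a single linear form. This contradiction shows $P\notin\Fn$.

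Combining the two steps gives $P\in\Vn\setminus\Fn$, which is exactly the failure of the converse. This agrees with the remark after \cref{prop:secant} that bounded-rank loci are not closed for $d\geq3$.
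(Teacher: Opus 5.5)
Your proof is correct. The paper states this corollary without a separate proof. Your forward argument, that $g$ vanishes on $\Vn \supseteq \Fn$, is exactly the implicit one.

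For the failure of the converse you take a different route. The paper relies on the remark that bounded-rank loci need not be closed for $d\geq3$. Later it gives the example $x_1^3x_2\in\mathcal{V}_{2,4,2}$, whose Waring rank $4$ is cited from the literature rather than proved. You instead use $x_1^2x_2$ with $(n,d,r)=(2,3,2)$, and you prove $x_1^2x_2\notin\mathcal{F}_{2,3,2}$ directly through the covariance of the Hessian determinant. This makes your argument self-contained and elementary.

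What it gives up is illustrative force. Since $rn=N=4$ here, $\mathcal{V}_{2,3,2}=\mathcal{H}_{2,3}$ and $I(\mathcal{V}_{2,3,2})=0$. Your witness therefore only exhibits the ``full-dimensional closure'' phenomenon the paper notes after the Alexander--Hirschowitz discussion, where there are no certificates to fail in the first place. The paper's example lies on the proper hypersurface $\det\operatorname{Cat}_{2,2}(P)=0$. It shows that even a nonzero certificate equation returned by the pipeline can vanish at a non-representable target.

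Your Hessian argument transfers directly to that sharper example:
\begin{itemize}
\item The Hessian determinant of $x_1^3x_2$ is $-9x_1^4\neq0$.
\item A single fourth power $c\,\ell^4$ has identically zero Hessian determinant, which rules out the proportional case.
\item For independent $\ell_1,\ell_2$, the normal form $y_1^4+y_2^4$ has Hessian determinant $144\,y_1^2y_2^2$, which is not a constant times the fourth power of a linear form.
\end{itemize}
So $x_1^3x_2\notin\mathcal{F}_{2,4,2}$. Only rank $>2$ is needed, not rank exactly $4$, so you get the more informative counterexample without citing the rank result.
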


The generic code realizes this construction directly. It enumerates weak compositions $\alpha$ of $d$, creates the $N$ coefficient variables, constructs the equations in \cref{eq:coefficient-map} for arbitrary supplied $(n,r,d)$, forms $J$ over $\mathbb{Q}$, and eliminates all $rn$ weight variables. The fixed quadratic scripts therefore serve as checks of the same general construction.

Before computing equations, dimension already determines whether a nonzero vanishing ideal can exist. The Alexander--Hirschowitz theorem gives the generic dimension of these closures. With $n$ denoting the number of variables, for $d\geq3$ one has
\[
    \dim\Vn=\min\{rn,N\}
\]
except for
\[
    (n,d,r)\in\{(3,4,5),(4,4,9),(5,3,7),(5,4,14)\},
\]
where the affine dimension is one below the expected value \citep{BrambillaOttaviani2008}. If $\dim\Vn=N$, then $\Vn=\Hnd$ and $I(\Vn)=0$. This does not imply $\Fn=\Hnd$. A full-dimensional closure can contain limits whose exact rank exceeds $r$.

\begin{remark}[Real architectures]
\label{rem:real}
Absorbing $a_i$ in \cref{eq:signed-network} is valid over $\mathbb{C}$. Over $\mathbb{R}$ it fails for even $d$ and negative $a_i$. The signed real architecture \cref{eq:signed-network}, the absorbed architecture $\sum_i(v_i^\top x)^d$, and the positive-semidefinite quadratic model in \cref{sec:optimization} are therefore distinct. A real target outside $\Vn$ is outside every corresponding real width-$r$ class, but membership in $\Vn(\mathbb{R})$ doesn't need to imply representability by a chosen real architecture. Complete real descriptions can require polynomial inequalities as well as equalities \citep{Kubjas2024}.
\end{remark}

\section{Quadratic networks}
\label{sec:quadratic}

The quadratic case is exceptional because bounded symmetric rank is already closed, so the algebraic certificates characterize exact complex representability. For $d=2$, identify $P(x)=x^\top Mx$ with $M\in\Sym_n(\mathbb{C})$. Under the monomial coefficient convention, $M_{ii}=c_{2e_i}$ and $M_{ij}=c_{e_i+e_j}/2$ for $i\neq j$.

\begin{theorem}[Quadratic determinantal characterization]
\label{thm:quadratic}
Let $s=\min(r,n)$. Then
\begin{equation}
    \mathcal{F}_{n,2,r}=\mathcal{V}_{n,2,r}
    =\{M\in\Sym_n(\mathbb{C}):\rank(M)\leq s\}.
    \label{eq:quadratic-variety}
\end{equation}
If $s<n$, its defining ideal is generated by the $(s+1)\times(s+1)$ minors of the generic symmetric matrix. If $s=n$, the ideal is zero.
\end{theorem}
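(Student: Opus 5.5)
We identify a quadratic form with its symmetric matrix and prove the set-theoretic equality directly, then quote the classical ideal-theoretic result for symmetric determinantal ideals \citep{Jozefiak1978}. Under the identification $P(x)=x^\top Mx$, a single summand $(v_i^\top x)^2$ corresponds to the rank-one symmetric matrix $v_iv_i^\top$. Hence $\phi_{n,2,r}(v_1,\ldots,v_r)$ corresponds to $VV^\top$, where $V\in\mathbb{C}^{n\times r}$ is the matrix with columns $v_i$. The first step is the inclusion $\mathcal{F}_{n,2,r}\subseteq\{\rank M\le s\}$. This is immediate, since $\rank(VV^\top)\le\min(r,n)=s$.

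The second step is the reverse inclusion, which is the only place where working over $\mathbb{C}$ matters. We use Takagi (Autonne) factorization, or more elementarily the fact that every complex symmetric matrix is congruent to $\operatorname{diag}(I_k,0)$. Concretely, a complex symmetric $M$ of rank $k$ admits $M=SDS^\top$ with $S$ invertible and $D=\operatorname{diag}(1,\ldots,1,0,\ldots,0)$. This follows by diagonalizing the symmetric bilinear form $(x,y)\mapsto x^\top My$: Gram--Schmidt-type reduction for symmetric bilinear forms works over any field of characteristic $\neq2$, and over $\mathbb{C}$ every nonzero diagonal entry has a square root that can be absorbed. We then take $V$ to consist of the first $k$ columns of $S$, padded with zero columns up to width $r$; this is possible because $k\le s\le r$. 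This gives $M=VV^\top\in\mathcal{F}_{n,2,r}$.

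The third step concludes that the image equals the rank locus. The rank locus is Zariski closed, being cut out by the $(s+1)\times(s+1)$ minors. Therefore $\mathcal{F}_{n,2,r}=\mathcal{V}_{n,2,r}$, and this set equals the common zero set of those minors. If $s=n$, the locus is all of $\Sym_n(\mathbb{C})$, so by \cref{prop:elimination} the vanishing ideal is zero.

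The main obstacle is the ideal-theoretic claim for $s<n$: the minors must generate the full vanishing ideal, not merely define the variety up to radical. The Nullstellensatz only shows that $I(\mathcal{V}_{n,2,r})$ is the radical of the minor ideal. For this step I would invoke the classical theorem that the ideal of $(s+1)$-minors of a generic symmetric matrix is prime in characteristic zero, due to Kutz and J\'ozefiak \citep{Jozefiak1978}; primality gives radicality, which closes the gap. If a self-contained sanity check is wanted, I would compare dimensions. The fibers of $V\mapsto VV^\top$ on full-rank $V\in\mathbb{C}^{n\times s}$ are $O(s,\mathbb{C})$-orbits, so the locus has dimension $ns-\binom{s}{2}$, which matches the known dimension of the symmetric determinantal variety. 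I would not reprove primality from scratch.
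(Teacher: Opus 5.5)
Your proposal is correct and follows essentially the same route as the paper: you bound $\rank$ of the Gram-type product by $s$, obtain the reverse inclusion from a symmetric factorization padded with zeros (your congruence to $\operatorname{diag}(I_k,0)$ is an equivalent alternative to the Autonne--Takagi factorization the paper uses), use closedness of the rank locus to get $\mathcal{F}_{n,2,r}=\mathcal{V}_{n,2,r}$, and cite the classical symmetric determinantal results for the ideal. Your explicit point that primality (hence radicality) of the minor ideal is what upgrades the set-theoretic description to the ideal-theoretic one is a useful clarification; the paper leaves it implicit in its citation.
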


\begin{proof}
For $V\in\mathbb{C}^{r\times n}$, the network output is
\[
    \sum_{i=1}^r(v_i^\top x)^2=x^\top V^\top Vx,
\]
so its matrix has rank at most $s$. Conversely, let $M$ be complex symmetric of rank $q\leq s$. The Autonne--Takagi factorization gives $M=U\Sigma U^\top$ \citep{HornZhang2012}. Removing the zero singular values and setting $B=\Sigma_q^{1/2}U_q^\top\in\mathbb{C}^{q\times n}$ gives $M=B^\top B$; pad $B$ with zero rows if $q<r$. Hence the image is the rank-$\leq s$ locus, which is closed. Its ideal is the classical symmetric determinantal ideal \citep{Jozefiak1978}.
\end{proof}

\begin{corollary}[Dimension]
\label{cor:quadratic-dimension}
For $s=\min(r,n)$,
\begin{equation}
    \dim\mathcal{V}_{n,2,r}=ns-\binom{s}{2}.
    \label{eq:quadratic-dimension}
\end{equation}
In particular, for $r\leq n$ the dimension is $nr-\binom{r}{2}$.
\end{corollary}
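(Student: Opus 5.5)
By \cref{thm:quadratic}, $\mathcal{V}_{n,2,r}$ is the locus of symmetric matrices of rank at most $s$. It is therefore enough to compute the dimension of that variety. I will count it in two ways: directly, via a fibration over subspaces, and at the network level, via the orthogonal symmetry of the parameterization.

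\emph{Direct count.} Let $M$ be a generic symmetric matrix of rank exactly $s$. The rank-exactly-$s$ locus is dense, because the rank-$\le s$ locus is the closure of the image of the irreducible space $\mathbb{C}^{r\times n}$ and generic $V$ gives rank exactly $s$. Its column space is an $s$-dimensional subspace $U\subseteq\mathbb{C}^n$. A symmetric matrix with column space $U$ has its row space equal to $U$ as well, so it is determined by a nondegenerate symmetric bilinear form on $U^*$, that is, by an element of an open subset of $\Sym_s(\mathbb{C})$. This gives a fibration over the Grassmannian $\mathrm{Gr}(s,n)$. The base has dimension $s(n-s)$ and the fibre has dimension $\binom{s+1}{2}$. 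The total is
\[
s(n-s)+\binom{s+1}{2}=ns-s^2+\frac{s(s+1)}{2}=ns-\binom{s}{2}.
\]
In particular, $s=n$ gives $\binom{n+1}{2}=N$, which is consistent with the ideal being zero.

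\emph{Network-level count.} For $r\le n$ I will give a second argument that exhibits the $O(r,\mathbb{C})$ redundancy. The map $\phi(V)=V^\top V$ is invariant under $V\mapsto QV$ with $Q^\top Q=I$. Take a generic $V$, which has rank $r$. I will show that the fibre through $V$ is exactly the orbit $O(r,\mathbb{C})\cdot V$. Suppose $V^\top V=W^\top W$ with both of rank $r$. Then $V$ and $W$ have the same row space, so $W=QV$ for some invertible $Q$. Substituting gives $V^\top(Q^\top Q-I)V=0$. Since $V$ has full row rank, this forces $Q^\top Q=I$. The stabilizer of $V$ is trivial, again because $V$ has full row rank. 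So the generic fibre has dimension $\dim O(r)=\binom r2$. The fibre-dimension theorem then gives
\[
\dim\mathcal{V}_{n,2,r}=nr-\binom r2,
\]
which agrees with the direct count at $s=r$. For $r>n$ the image is all of $\Sym_n$, which agrees with the formula at $s=n$.

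The main point needing care is the fibre-dimension step. It requires the generic fibre to be exactly the orbit. The cancellation of $V$ from $V^\top(Q^\top Q-I)V=0$ uses full row rank, and that holds only on a dense open set, which suffices. Irreducibility of the source makes the generic fibre dimension well defined. The Grassmannian count is routine and will serve as the primary proof; the orthogonal argument serves as the interpretation announced in the introduction.
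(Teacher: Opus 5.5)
Your primary Grassmannian count is the paper's proof. The paper fixes the $(n-s)$-dimensional kernel and a nondegenerate symmetric form on the $s$-dimensional quotient, while you fix the column space $U$ and a nondegenerate form on $U^*$; this is the same data, because $\ker M=U^{\perp}$ for symmetric $M$. Your network-level count is also correct and goes a step beyond the paper. The paper's orthogonal-redundancy remark only shows each fibre contains an $O(r,\mathbb{C})$-orbit and invokes this corollary for equality, whereas your cancellation $V^\top(Q^\top Q-I)V=0\Rightarrow Q^\top Q=I$ shows the generic fibre is exactly the orbit, giving an independent proof of the $r\le n$ case (note that every $W$ in the fibre automatically has full row rank, since $r=\rank(W^\top W)\le\rank W\le r$).
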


\begin{proof}
The rank-exactly-$s$ stratum is dense in \cref{eq:quadratic-variety}. Choose its $(n-s)$-dimensional kernel, contributing $s(n-s)$ parameters, and a nondegenerate symmetric form on the resulting $s$-dimensional quotient, contributing $s(s+1)/2$. Their sum is \cref{eq:quadratic-dimension}.
\end{proof}

\begin{remark}[Orthogonal redundancy]
\label{rem:orthogonal-symmetry}
The dimension loss has a direct network-level mechanism. For
\[
    O(r,\mathbb{C})=\{Q\in\mathbb{C}^{r\times r}:Q^\top Q=I_r\},
\]
the transformation $V\mapsto QV$ leaves $V^\top V$ unchanged. Hence every fiber contains an orthogonal-group orbit. When $r\leq n$ and $V$ has full row rank, the stabilizer of $V$ is trivial, so this orbit has dimension
\[
    \dim O(r,\mathbb{C})=\binom r2.
\]
The parameter space has dimension $rn$, and Corollary~\ref{cor:quadratic-dimension} shows that the generic fiber loses exactly these $\binom r2$ orthogonal directions; $rn-\binom r2$ dimensions remain in the image.
\end{remark}

The smallest nontrivial quadratic example makes the determinantal certificate explicit. For $(n,r)=(3,2)$, write
\begin{equation}
    M(c)=
    \begin{pmatrix}
        c_{200} & c_{110}/2 & c_{101}/2\\
        c_{110}/2 & c_{020} & c_{011}/2\\
        c_{101}/2 & c_{011}/2 & c_{002}
    \end{pmatrix}.
    \label{eq:ternary-matrix}
\end{equation}

\begin{corollary}[Explicit determinant]
\label{cor:ternary-determinant}
A ternary quadratic belongs to $\mathcal{F}_{3,2,2}$ if and only if
\begin{align}
    4\det M(c)
    ={}&4c_{200}c_{020}c_{002}-c_{200}c_{011}^2-c_{020}c_{101}^2\notag\\
       &{}-c_{002}c_{110}^2+c_{110}c_{101}c_{011}=0.
    \label{eq:determinant-certificate}
\end{align}
\end{corollary}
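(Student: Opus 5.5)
The plan is to specialize \cref{thm:quadratic} to $(n,r)=(3,2)$ and then expand the resulting $3\times3$ determinant. Since $s=\min(2,3)=2<3$, \cref{thm:quadratic} gives
\[
\mathcal{F}_{3,2,2}=\mathcal{V}_{3,2,2}=\{M\in\Sym_3(\mathbb{C}):\rank M\leq 2\},
\]
and the defining ideal is generated by the $3\times3$ minors of the generic symmetric matrix. For a $3\times3$ matrix the only such minor is the determinant, so the ideal is principal. A ternary quadratic $P=\sum c_\alpha x^\alpha$ therefore lies in $\mathcal{F}_{3,2,2}$ exactly when the symmetric matrix $M(c)$ attached to $P$ is singular. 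Here $M(c)$ is built by the convention $M_{ii}=c_{2e_i}$, $M_{ij}=c_{e_i+e_j}/2$, which gives \cref{eq:ternary-matrix}. The equivalence uses no ideal theory beyond the rank condition itself, because the theorem identifies the image set directly, not only its closure.

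The second step is to confirm that the matrix convention is correct. We check that $x^\top M(c)x=P(x)$: each diagonal entry contributes $c_{2e_i}x_i^2$, and each off-diagonal pair contributes $2\cdot(c_{e_i+e_j}/2)x_ix_j=c_{e_i+e_j}x_ix_j$. Hence the quadratic form of $M(c)$ is the network output, and the rank condition in \cref{thm:quadratic} applies to $M(c)$.

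The final step is to expand $\det M(c)$, for instance by the rule of Sarrus, writing $a=c_{110}/2$, $b=c_{101}/2$, $e=c_{011}/2$. The expansion is
\[
\det M(c)=c_{200}c_{020}c_{002}+2abe-c_{200}e^2-c_{020}b^2-c_{002}a^2.
\]
Substituting back gives $2abe=c_{110}c_{101}c_{011}/4$, and each squared term acquires a factor $1/4$. Multiplying through by $4$ clears the denominators and yields exactly \cref{eq:determinant-certificate}. Because $4\neq0$, vanishing of $4\det M(c)$ is equivalent to vanishing of $\det M(c)$.

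There is no real obstacle in this argument; the only place where care is needed is the coefficient bookkeeping. In particular, the cross term $c_{110}c_{101}c_{011}$ must come out with coefficient $1$ and a positive sign: $2\cdot\tfrac18=\tfrac14$, which becomes $1$ after multiplying by $4$.
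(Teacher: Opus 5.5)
Your proposal is correct and follows the paper's proof: specialize \cref{thm:quadratic} to $s=2$, note that the determinant is the only $3\times3$ minor, and expand it. Your added checks (that $x^\top M(c)x=P(x)$, and that the coefficient of $c_{110}c_{101}c_{011}$ is $+1$ after scaling by $4$) are correct and merely spell out steps the paper leaves implicit.
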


\begin{proof}
Apply \cref{thm:quadratic} with $s=2$. The only $3\times3$ minor is the determinant, whose expansion is \cref{eq:determinant-certificate}.
\end{proof}

The generic elimination code recovers \cref{eq:determinant-certificate} as one returned generator. The target $P(x)=x_1^2+x_2^2+x_3^2$ has matrix $I_3$ and violates the equation, so it is not exactly representable by any complex width-two quadratic network. For signed real output weights, the same rank characterization follows from the spectral theorem. The absorbed real model used below is more restrictive since it contains only positive-semidefinite matrices $V^\top V$ of rank at most two.

\section{Higher-degree certificates}
\label{sec:higher}

Let $W=(\mathbb{C}^n)^*$ and $P\in\Sym^d W$. Contraction by a degree-$k$ differential form defines the catalecticant map
\begin{equation}
    \operatorname{Cat}_{k,d-k}(P):\Sym^k(W^*)\longrightarrow\Sym^{d-k}W,
    \qquad D\longmapsto D\mathbin{\lrcorner}P.
    \label{eq:catalecticant-map}
\end{equation}
If $P=\ell^d$, the image of this map is contained in the line spanned by $\ell^{d-k}$, so the catalecticant has rank at most one. Therefore
\[
    P=\sum_{i=1}^r\ell_i^d
    \quad\Longrightarrow\quad
    \rank\operatorname{Cat}_{k,d-k}(P)\leq r.
\]
All $(r+1)\times(r+1)$ minors consequently vanish on $\mathcal{F}_{n,d,r}$ and, being polynomial, on $\mathcal{V}_{n,d,r}$. Thus catalecticant minors are systematic closure certificates. They do not define every Veronese secant variety in every degree, but in the cases below classical theory proves that they generate the full ideal \citep{IarrobinoKanev1999,LandsbergOttaviani2013,Raicu2013}.

\paragraph{Certificate atlas.}

The generic implementation constructs \cref{eq:graph-ideal} over $\mathbb{Q}$, uses lexicographic order, and invokes Macaulay2's \texttt{eliminate} function \citep{Macaulay2}. Polynomial-ideal problems have doubly exponential worst-case bounds \citep{MayrMeyer1982}, but such bounds do not predict a particular structured instance. Accordingly, we treat algebraic correctness and observed completion time as separate questions.

The primary recorded log contains all 100 tuples with $n,r\in\{2,3,4,5,6\}$ and $d\in\{2,3,4,5\}$ under a 3600-second per-instance cutoff. A parser extracts one row per architecture and computes the corresponding theoretical dimension and codimension. Among the 33 completed runs, 21 returned the zero ideal and 12 returned nonzero ideals. In every completed case, zero versus nonzero agrees with the theoretical prediction: zero occurs exactly when the quadratic formula or Alexander--Hirschowitz theorem predicts $\dim\mathcal{V}_{n,d,r}=N$. The original text log did not record ideal dimensions directly, so zero versus nonzero is the strongest invariant that can be compared uniformly across all completed runs.

\begin{center}
\centering
\captionof{table}{Theory-versus-computation summary for the 3600-second sweep. ``Zero'' and ``nonzero'' refer to the elimination ideal returned by Macaulay2.}
\label{tab:benchmark-summary}
\begin{tabular}{@{}ccccc@{}}
\toprule
Degree $d$ & Completed & Zero & Nonzero & Timed out \\
\midrule
2 & 20 & 12 & 8 & 5 \\
3 & 6  & 5  & 1 & 19 \\
4 & 5  & 3  & 2 & 20 \\
5 & 2  & 1  & 1 & 23 \\
\midrule
Total & 33 & 21 & 12 & 67 \\
\bottomrule
\end{tabular}
\end{center}

\Cref{tab:selected-atlas} reports representative completed cases. Here $K=rn+N$ is the number of variables in the graph-ideal ring before elimination. ``Returned generators'' describes Macaulay2's output and does not assert minimality. The full machine-readable atlas includes all 100 cases and the distinct degrees represented in every successful output.

\begin{center}
\centering
\captionof{table}{Selected entries from the certificate atlas. The recorded runtimes are not portable benchmarks.}
\label{tab:selected-atlas}
\small
\begin{tabular}{@{}cccccccc@{}}
\toprule
$(n,d,r)$ & $N$ & $rn$ & $K$ & Pred. codim. & Returned gens. & Degrees & Time (s) \\
\midrule
$(3,2,2)$ & 6  & 6  & 12 & 1 & 1   & 3 & 0.50 \\
$(4,2,2)$ & 10 & 8  & 18 & 3 & 10  & 3 & 0.52 \\
$(4,2,3)$ & 10 & 12 & 22 & 1 & 1   & 4 & 0.74 \\
$(6,2,3)$ & 21 & 18 & 39 & 6 & 105 & 4 & 1303.97 \\
$(3,3,2)$ & 10 & 6  & 16 & 4 & 20  & 3 & 1.21 \\
$(2,4,2)$ & 5  & 4  & 9  & 1 & 1   & 3 & 0.49 \\
$(3,4,2)$ & 15 & 6  & 21 & 9 & 148 & 3 & 38.21 \\
$(2,5,2)$ & 6  & 4  & 10 & 2 & 4   & 3 & 0.50 \\
\bottomrule
\end{tabular}
\end{center}

For $(3,4,2)$, the direct computation already returns 148 cubics at $K=21$, illustrating how quickly an explicit generating set can become unwieldy. By contrast, the $(3,3,2)$ output below has a compact determinantal description. The 67 timeouts delimit only this direct implementation under the stated cutoff; they are not impossibility results for Gr\"obner methods or specialized secant-variety algorithms.

\paragraph{Binary quartics.}

Let
\[
P(x_1,x_2)=c_{40}x_1^4+c_{31}x_1^3x_2+c_{22}x_1^2x_2^2+c_{13}x_1x_2^3+c_{04}x_2^4.
\]
In divided-power coordinates its middle catalecticant is
\begin{equation}
    \operatorname{Cat}_{2,2}(P)=
    \begin{pmatrix}
        c_{40} & c_{31}/4 & c_{22}/6\\
        c_{31}/4 & c_{22}/6 & c_{13}/4\\
        c_{22}/6 & c_{13}/4 & c_{04}
    \end{pmatrix}.
    \label{eq:quartic-catalecticant}
\end{equation}

\begin{proposition}[Binary-quartic closure certificate]
\label{prop:quartic}
The Zariski closure $\mathcal{V}_{2,4,2}$ is the hypersurface defined by $\det\operatorname{Cat}_{2,2}(P)=0$. After clearing denominators, its equation is
\begin{equation}
    72c_{40}c_{22}c_{04}-27c_{40}c_{13}^2-27c_{31}^2c_{04}
    +9c_{31}c_{22}c_{13}-2c_{22}^3=0.
    \label{eq:quartic-certificate}
\end{equation}
The generic pipeline returns this cubic equation.
\end{proposition}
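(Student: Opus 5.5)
We prove two inclusions between $\mathcal{V}_{2,4,2}$ and the hypersurface $Z=\{\det\operatorname{Cat}_{2,2}(P)=0\}$ in the five-dimensional space $\mathcal{H}_{2,4}$. Then we check that $\det\operatorname{Cat}_{2,2}$ is irreducible, so it generates the ideal. The inclusion $\mathcal{V}_{2,4,2}\subseteq Z$ is the catalecticant argument already given. If $P=\ell_1^4+\ell_2^4$, then $\operatorname{Cat}_{2,2}(P)$ is a sum of two rank-one matrices, so its $3\times3$ determinant vanishes on $\mathcal{F}_{2,4,2}$, and hence on the closure $\mathcal{V}_{2,4,2}$ by continuity in the Zariski topology. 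Concretely, with $\ell=ax_1+bx_2$ one has $c_{40}=a^4$, $c_{31}=4a^3b$, $c_{22}=6a^2b^2$, $c_{13}=4ab^3$, $c_{04}=b^4$. Substituting into \cref{eq:quartic-catalecticant} gives the matrix $uu^\top$ with $u=(a^2,ab,b^2)^\top$. This confirms the divided-power normalization and shows the rank is at most one.

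For the reverse inclusion, we use dimensions. By Alexander--Hirschowitz (or directly), $\dim\mathcal{V}_{2,4,2}=\min\{rn,N\}=\min\{4,5\}=4$. The case $(2,4,2)$ is not exceptional, and it is at least $4$ once we exhibit a point where $d\phi$ has rank $4$. This can be checked at $v_1=(1,0)$, $v_2=(0,1)$: the differential has columns proportional to $x_1^4$ and $x_1^3x_2$, and to $x_2^4$ and $x_1x_2^3$, which are independent. So $\mathcal{V}_{2,4,2}$ is an irreducible closed set, being the closure of the image of an irreducible space, of codimension one inside $Z$.

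Next we show that $g=\det\operatorname{Cat}_{2,2}$ is irreducible. Once that holds, $Z$ is an irreducible hypersurface of dimension $4$ containing the irreducible $4$-dimensional $\mathcal{V}_{2,4,2}$, so the two coincide. Moreover, $I(\mathcal{V}_{2,4,2})=\sqrt{(g)}=(g)$ since an irreducible polynomial generates a prime ideal. Irreducibility is the main obstacle; I would argue it by degree. A factorization of the cubic $g$ would contain a linear factor $\lambda$. The zero locus of $\lambda$ would then be a hyperplane contained in $Z$, and since $\mathcal{V}_{2,4,2}\subseteq Z$ is an irreducible component, $\mathcal{V}_{2,4,2}$ would be either that hyperplane or a component of $\{q=0\}$. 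The first option is excluded because $\mathcal{V}_{2,4,2}$ contains the quartics $\ell^4$, which span all of $\mathcal{H}_{2,4}$. The second would make $\mathcal{V}_{2,4,2}$ a quadric component, which the degree-$3$ generator returned by the computation contradicts. More cleanly, restricting to the slice $c_{31}=c_{13}=0$ leaves $72c_{40}c_{22}c_{04}-2c_{22}^3=2c_{22}(36c_{40}c_{04}-c_{22}^2)$. Here one checks directly that no linear form divides $g$ by examining the $c_{31}^2c_{04}$ and $c_{40}c_{13}^2$ terms: a linear factor would have to divide both. Alternatively, Sylvester's classical theorem identifies $g$ as the irreducible invariant $J$ of binary quartics.

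Finally, the displayed equation \cref{eq:quartic-certificate} is obtained by expanding the $3\times3$ determinant and multiplying by $432$ to clear denominators, which is routine. The statement that the pipeline returns this cubic then follows from \cref{prop:elimination}: elimination produces generators of the principal ideal $I(\mathcal{V}_{2,4,2})=(g)$, and this matches the single degree-$3$ generator recorded in \cref{tab:selected-atlas}.
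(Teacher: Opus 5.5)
Your route differs from the paper's. The paper gets the reverse inclusion by citing the classical description of the secant variety of the rational normal quartic. You instead combine $\dim\mathcal{V}_{2,4,2}=4$ (your rank-4 differential at $v_1=(1,0)$, $v_2=(0,1)$ is correct) with irreducibility of $g=\det\operatorname{Cat}_{2,2}(P)$, which would also give $I(\mathcal{V}_{2,4,2})=(g)$ directly. The $uu^\top$ check, the factor $432$, and the deduction of the pipeline claim are fine. One slip: $\mathcal{V}_{2,4,2}$ has codimension one in $\mathcal{H}_{2,4}$, i.e.\ codimension zero in $Z$.

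The gap is the step you flag as the crux: your arguments do not prove that $g$ is irreducible.
\begin{itemize}
\item Excluding the quadric branch via ``the degree-3 generator returned by the computation'' is circular, since you later derive the pipeline claim from $I(\mathcal{V}_{2,4,2})=(g)$.
\item The slice $c_{31}=c_{13}=0$ gives a \emph{reducible} restriction, which says nothing either way.
\item The assertion that a linear factor ``would have to divide both'' the $c_{31}^2c_{04}$ and $c_{40}c_{13}^2$ terms is not a valid principle: factors of a polynomial need not divide its individual monomials.
\end{itemize}
Only the final appeal to Sylvester's invariant $J$ is sound, and that re-imports classical theory just as the paper does.

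The repair is short, and either of two arguments works.
\begin{itemize}
\item Polarization. In your case split, $g=\lambda q$ forces the irreducible $\mathcal{V}_{2,4,2}$ into $\{\lambda=0\}$, which is excluded because fourth powers span, or into $\{q=0\}$ for a nonzero homogeneous quadric $q$. The latter is impossible. With $\beta$ the polar form of $q$, for all $\ell_1,\ell_2$ we have $0=q(\ell_1^4+\ell_2^4)=q(\ell_1^4)+2\beta(\ell_1^4,\ell_2^4)+q(\ell_2^4)=2\beta(\ell_1^4,\ell_2^4)$, so $\beta=0$ by spanning.
\item Direct factorization. Write $g=c_{40}A+B$ with $A=72c_{22}c_{04}-27c_{13}^2$, an irreducible rank-three quadric, and $B=-27c_{31}^2c_{04}+9c_{31}c_{22}c_{13}-2c_{22}^3$. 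Since $g$ has degree one in $c_{40}$, a nontrivial factorization would produce a $c_{40}$-free factor dividing both $A$ and $B$, hence $A\mid B$. That is impossible, because $B$ contains $c_{22}^3$ while $A$ has no $c_{22}^2$ term.
\end{itemize}

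With either patch you get a citation-free proof, which the paper's proof is not. The price is that the dimension-plus-irreducibility argument only works when the candidate locus is a hypersurface. The cited catalecticant theory also covers higher-codimension cases such as $(3,3,2)$.
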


\begin{proof}
The rank argument following \cref{eq:catalecticant-map} gives the necessary determinant equation. Classical results for the rational normal quartic show that it defines the second secant variety \citep{IarrobinoKanev1999,LandsbergOttaviani2013}. Expanding \cref{eq:quartic-catalecticant} and clearing denominators gives \cref{eq:quartic-certificate}.
\end{proof}

\begin{remark}[Border rank is not exact rank]
The vanishing of \cref{eq:quartic-certificate} characterizes membership in $\mathcal{V}_{2,4,2}$, not in $\mathcal{F}_{2,4,2}$. For example,
\[
    \frac{(x_1+\varepsilon x_2)^4-x_1^4}{4\varepsilon}
    \longrightarrow x_1^3x_2
    \qquad(\varepsilon\to0),
\]
so $x_1^3x_2$ has border rank at most two, while its exact Waring rank is four \citep{Comon2008,Arjevani2026}. It satisfies the determinant equation but is not exactly representable at width two.
\end{remark}

\paragraph{Ternary cubics.}

For a ternary cubic $P=\sum_{i+j+k=3}c_{ijk}x_1^ix_2^jx_3^k$, transpose the map $\operatorname{Cat}_{1,2}(P)$ and write it in divided-power coordinates as
\begin{equation}
    C(P)=
    \begin{pmatrix}
    c_{300} & c_{210}/3 & c_{201}/3 & c_{120}/3 & c_{111}/6 & c_{102}/3\\
    c_{210}/3 & c_{120}/3 & c_{111}/6 & c_{030} & c_{021}/3 & c_{012}/3\\
    c_{201}/3 & c_{111}/6 & c_{102}/3 & c_{021}/3 & c_{012}/3 & c_{003}
    \end{pmatrix}.
    \label{eq:ternary-cubic-catalecticant}
\end{equation}

\begin{proposition}[Ternary-cubic secant equations]
\label{prop:ternary-cubic}
The ideal $I(\mathcal{V}_{3,3,2})$ is generated by the 20 maximal minors of $C(P)$. The 20 cubic generators returned by the generic elimination pipeline generate the same ideal.
\end{proposition}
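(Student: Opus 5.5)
We prove the two assertions separately. The first is that the maximal minors generate $I(\mathcal{V}_{3,3,2})$. The second is that this ideal coincides with the ideal generated by the pipeline's output.

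\textbf{The minors vanish on the closure.} Write $\Phi(P)=\operatorname{Cat}_{1,2}(P)$ for the map in \cref{eq:catalecticant-map}; its transpose, in divided-power coordinates, is $C(P)$. By the rank argument following \cref{eq:catalecticant-map}, $\rank C(P)\le 2$ for $P\in\mathcal{F}_{3,3,2}$. Hence all $3\times3$ minors of the $3\times6$ matrix $C(P)$ vanish on $\mathcal{F}_{3,3,2}$, and therefore on $\mathcal{V}_{3,3,2}$. There are $\binom{6}{3}=20$ such minors.

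\textbf{The minors generate the full ideal.} We use the classical theorem that $\sigma_2(\nu_d(\mathbb{P}^{n-1}))$ is defined ideal-theoretically by the $3\times3$ minors of the catalecticants $\operatorname{Cat}_{1,d-1}$ (Kanev's theorem; see \citep{LandsbergOttaviani2013,Raicu2013}). Specializing to $n=d=3$ gives $I(\mathcal{V}_{3,3,2})=I_3(C(P))$.

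Two consistency checks support this specialization. First, the maximal-minor locus of a $3\times6$ matrix has expected codimension $6-3+1=4$. This matches the predicted codimension $N-rn=10-6=4$ in \cref{tab:selected-atlas}, since Alexander--Hirschowitz gives $\dim\mathcal{V}_{3,3,2}=6$. Second, the radical and primality statements come from Raicu's work, so the minors generate the ideal itself and not merely up to radical. The entries of $C(P)$ are linear forms in the coefficients and the scaling by $1/3$ and $1/6$ is invertible over $\mathbb{Q}$, so the divided-power normalization does not affect the ideal.

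\textbf{The pipeline's output generates the same ideal.} Let $G$ be the ideal generated by the returned cubics. By \cref{prop:elimination}, $G=J\cap\mathbb{Q}[c]$, which is the vanishing ideal of the closure, defined over $\mathbb{Q}$. Since extension of scalars commutes with elimination, this already gives $G=I_3(C(P))$ abstractly. The paper, however, asserts an exact check, so we also verify the equality computationally:
\begin{enumerate}
\item Compute the 20 minors over $\mathbb{Q}$.
\item Reduce each minor modulo a Gröbner basis of $G$ and confirm remainder zero.
\item Conversely, reduce each returned generator modulo a Gröbner basis of $I_3(C(P))$ and confirm remainder zero.
\end{enumerate}
A simpler route is also available. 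Both generating sets consist of 20 cubics, and $I_3(C(P))$ has 20 linearly independent cubic minors. After one inclusion, it therefore suffices to compare the degree-3 $\mathbb{Q}$-spans by linear algebra on the coefficient vectors.

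\textbf{Main obstacle.} The hardest step is the ideal-theoretic statement, that is, ruling out that the minors define $\mathcal{V}_{3,3,2}$ only set-theoretically. We rely on the cited results for this and do not reprove it; the exact computer comparison serves as independent confirmation in this instance.
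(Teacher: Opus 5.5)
Your proposal is correct. You handle the first assertion as the paper does: the catalecticant rank bound gives vanishing, and generation is cited from Kanev and Raicu. For the second assertion, however, your main argument takes a different route.

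You derive it formally. The returned cubics generate $J\cap\mathbb{Q}[c]$, and elimination commutes with extension of scalars. So over $\mathbb{C}$ this ideal is $I(\mathcal{V}_{3,3,2})=I_3(C(P))$, by Proposition~\ref{prop:elimination} and the first assertion. Equality over $\mathbb{Q}$ then descends because $\mathbb{C}[c]$ is faithfully flat over $\mathbb{Q}[c]$.

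The paper uses only exact linear algebra on the logged output, which is essentially your ``simpler route.'' The returned cubics, the 20 minors, and their union all span 20-dimensional spaces, so the spans coincide and hence so do the ideals.

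Each approach buys something different. Your deduction needs no computation, but it presupposes both the cited theorem and that the logged polynomials really generate the elimination ideal. The paper's check tests the actual output and does not use the Kanev--Raicu theorem at all. Together with Proposition~\ref{prop:elimination}, it therefore gives a computer-assisted proof of the first assertion in this instance, not merely a consistency check.

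Two minor corrections. First, Kanev's theorem in general uses the $3\times3$ minors of both $\operatorname{Cat}_{1,d-1}$ and $\operatorname{Cat}_{2,d-2}$; as you stated it, it fails for $n=2$, where $\operatorname{Cat}_{1,d-1}$ has only two rows. This is harmless here, since for $d=3$ the two catalecticants are transposes. Second, linear independence of the 20 catalecticant minors is not automatic given the repeated entries. Your one-inclusion shortcut relies on it, so it must be verified computationally, as the paper does.
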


\begin{proof}
The minors vanish by the catalecticant rank argument. The classical theorem on the second secant of a Veronese variety identifies these $3\times3$ minors as generators of the full ideal \citep{Raicu2013}. For the computational comparison, denominators in \cref{eq:ternary-cubic-catalecticant} are retained and all 20 minors are expanded over $\mathbb{Q}$. The logged cubics span a 20-dimensional coefficient subspace, the minors span a 20-dimensional subspace, and their union also has rank 20. Hence the two cubic spans, and therefore the ideals they generate, are equal.
\end{proof}

Thus, for $(3,3,2)$, the general construction recovers a nontrivial higher-degree secant ideal in a recognizable determinantal form. Since $N=10$ and the predicted affine dimension is $rn=6$, its codimension is four, in agreement with the known geometry.

\paragraph{Target certificates.}

The companion evaluator accepts $(n,d,r)$, a completed benchmark log, and an exact rational target coefficient vector. It evaluates every returned generator and reports a violated invariant when one exists. By Corollary~\ref{cor:certificate}, any reported violation has the implication
\[
    g(P)\neq0
    \quad\Longrightarrow\quad
    P\notin\mathcal{V}_{n,d,r}
    \quad\Longrightarrow\quad
    P\notin\mathcal{F}_{n,d,r}.
\]
If all returned equations vanish, the utility reports membership in the computed closure and leaves exact rank undecided for $d\geq3$. It makes the stronger exact-membership conclusion only for $d=2$, where \cref{thm:quadratic} proves image equals closure. Tests certify $x_1^2+x_2^2+x_3^2$ outside $\mathcal{V}_{3,2,2}$ and $x_1^3+x_2^3+x_3^3$ outside $\mathcal{V}_{3,3,2}$; a two-term ternary cubic exercises the deliberately inconclusive higher-degree branch.

\section{Sphere loss floor}
\label{sec:optimization}

We finally connect the algebraic obstruction to a learning objective. The relevant metric is induced by the input distribution rather than by the ambient coefficient coordinates. Coefficient-space Frobenius distance is not generally the population mean-squared error. A distribution $\mu$ on inputs give
\begin{equation}
    \langle P,Q\rangle_\mu=\mathbb{E}_{x\sim\mu}[P(x)Q(x)],
    \qquad
    L_\mu(f,P)=\|f-P\|_\mu^2.
    \label{eq:distribution-metric}
\end{equation}
This is a low-rank tensor approximation problem in a metric determined by moments of $\mu$ \citep{Arjevani2026}. The calculation below is a concrete specialization of that general viewpoint.

The recorded experiment uses $x\sim\operatorname{Unif}(S^2)$, target $P(x)=x_1^2+x_2^2+x_3^2=x^\top I_3x$, and the absorbed real network
\[
    f_V(x)=\sum_{i=1}^2(v_i^\top x)^2=x^\top Mx,
    \qquad M=V^\top V\succeq0,\qquad\rank(M)\leq2.
\]

\begin{proposition}[Exact sphere loss floor]
\label{prop:sphere}
For a symmetric $3\times3$ matrix $A$ and $x\sim\operatorname{Unif}(S^2)$,
\begin{equation}
    \mathbb{E}\big[(x^\top Ax)^2\big]
    =\frac{2\tr(A^2)+\tr(A)^2}{15}.
    \label{eq:sphere-moment}
\end{equation}
Among positive-semidefinite matrices $M$ of rank at most two, the minimum of
\[
    \mathbb{E}\big[(x^\top(M-I_3)x)^2\big]
\]
is attained precisely by matrices whose eigenvalues are $(5/4,5/4,0)$, and the minimum is
\[
    L_\star=\frac16.
\]
\end{proposition}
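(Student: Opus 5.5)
Our plan has two parts: first establish the moment identity \cref{eq:sphere-moment}, then reduce the minimization to an eigenvalue problem.

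\emph{The moment identity.} We will use rotational invariance. Write $A=Q\Lambda Q^\top$ with $Q$ orthogonal. Since $Q^\top x$ is again uniform on $S^2$, we may assume $A=\operatorname{diag}(\lambda_1,\lambda_2,\lambda_3)$. Then
\[
\mathbb{E}\big[(x^\top Ax)^2\big]=\sum_{i,j}\lambda_i\lambda_j\,\mathbb{E}[x_i^2x_j^2].
\]
We need the standard moments $\mathbb{E}[x_i^4]=1/5$ and $\mathbb{E}[x_i^2x_j^2]=1/15$ for $i\neq j$. These follow from $\sum_{i,j}\mathbb{E}[x_i^2x_j^2]=1$ together with the Gaussian ratio $\mathbb{E}[g_i^4]/\mathbb{E}[g_i^2g_j^2]=3$, which transfers to the sphere by radial independence. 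Substituting gives
\[
\mathbb{E}\big[(x^\top Ax)^2\big]=\frac{3\sum_i\lambda_i^2+\sum_{i\neq j}\lambda_i\lambda_j}{15}=\frac{2\tr(A^2)+\tr(A)^2}{15}.
\]

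\emph{Reduction to eigenvalues.} Apply the identity with $A=M-I_3$. If $M$ has eigenvalues $\mu_1,\mu_2,\mu_3$, then $M-I_3$ has eigenvalues $\mu_i-1$, so the loss depends only on the spectrum of $M$. The constraints say $M$ is PSD of rank at most two. We may therefore take $\mu_3=0$ and $\mu_1,\mu_2\geq 0$. Every such pair of eigenvalues is realized: take $M=\operatorname{diag}(\mu_1,\mu_2,0)=V^\top V$ with $V=\operatorname{diag}(\sqrt{\mu_1},\sqrt{\mu_2},0)$ restricted to its first two rows. The problem becomes minimizing
\[
15L=2\big[(\mu_1-1)^2+(\mu_2-1)^2+1\big]+(\mu_1+\mu_2-3)^2
\]
over $\mu_1,\mu_2\geq0$.

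\emph{The two-variable minimization.} This function is a strictly convex quadratic, with Hessian $\begin{pmatrix}6&2\\2&6\end{pmatrix}\succ0$. Its unconstrained stationary point is therefore the unique global minimizer. Setting the gradient to zero gives
\[
4(\mu_i-1)+2(\mu_1+\mu_2-3)=0 \quad (i=1,2).
\]
By symmetry $\mu_1=\mu_2=\mu$, and then $4\mu-4+4\mu-6=0$, so $\mu=5/4$. This point is nonnegative, so the constraint is inactive. The value is
\[
15L=2\left(\tfrac1{16}+\tfrac1{16}+1\right)+\tfrac14=\tfrac94+\tfrac14=\tfrac52,
\]
so $L_\star=1/6$.

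\emph{Uniqueness.} Strict convexity gives a unique minimizing eigenvalue pair. Consequently a PSD rank-at-most-two matrix attains the minimum exactly when its spectrum is $(5/4,5/4,0)$. This argument also covers rank at most one and rank-deficient cases, because these are included in the parametrization with $\mu_i\geq0$.

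The main obstacle is justifying the fourth-moment constants on $S^2$; the Gaussian-to-sphere transfer, or a direct computation of $\mathbb{E}[x_3^4]=\int_{-1}^1 t^4\,dt/2=1/5$ via Archimedes' theorem, resolves it. Everything after that is routine.
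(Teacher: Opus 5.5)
Your proof is correct and follows essentially the same route as the paper. Both use a rotational-invariance fourth-moment computation: you diagonalize and use $\mathbb{E}[x_i^4]=1/5$, $\mathbb{E}[x_i^2x_j^2]=1/15$, where the paper contracts the isotropic moment tensor. Both then reduce to eigenvalues $(\mu_1,\mu_2,0)$ and minimize the same strictly convex quadratic, and your derivation of the moment constants and your explicit uniqueness argument fill in details the paper leaves implicit.
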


\begin{proof}
Rotational invariance gives
\[
\mathbb{E}[x_ix_jx_kx_l]
=\frac{\delta_{ij}\delta_{kl}+\delta_{ik}\delta_{jl}+\delta_{il}\delta_{jk}}{3(3+2)},
\]
which yields \cref{eq:sphere-moment} by contraction. Since the objective is orthogonally invariant, write the eigenvalues of $M$ as $(\lambda_1,\lambda_2,0)$ with $\lambda_1,\lambda_2\geq0$. Multiplying the loss by $15$ gives
\[
2\big((\lambda_1-1)^2+(\lambda_2-1)^2+1\big)
+(\lambda_1+\lambda_2-3)^2.
\]
This strictly convex quadratic has stationary equations $3\lambda_1+\lambda_2=5$ and $\lambda_1+3\lambda_2=5$, hence $\lambda_1=\lambda_2=5/4$. The point is feasible and gives $15L_\star=5/2$.
\end{proof}

The numerical script samples $10^4$ points by normalizing standard Gaussian vectors, fixes seed $49$, and applies full-batch Stochastic Gradient Descent (SGD) for $60{,}000$ iterations with learning rate $10^{-4}$. The final empirical MSE in \cref{fig:loss} is approximately $0.16$, consistent with the population value $1/6\approx0.1667$. This single trajectory is a numerical consistency check; it is not evidence that SGD generically reaches a global optimum.

\begin{center}
    \includegraphics[width=0.68\linewidth]{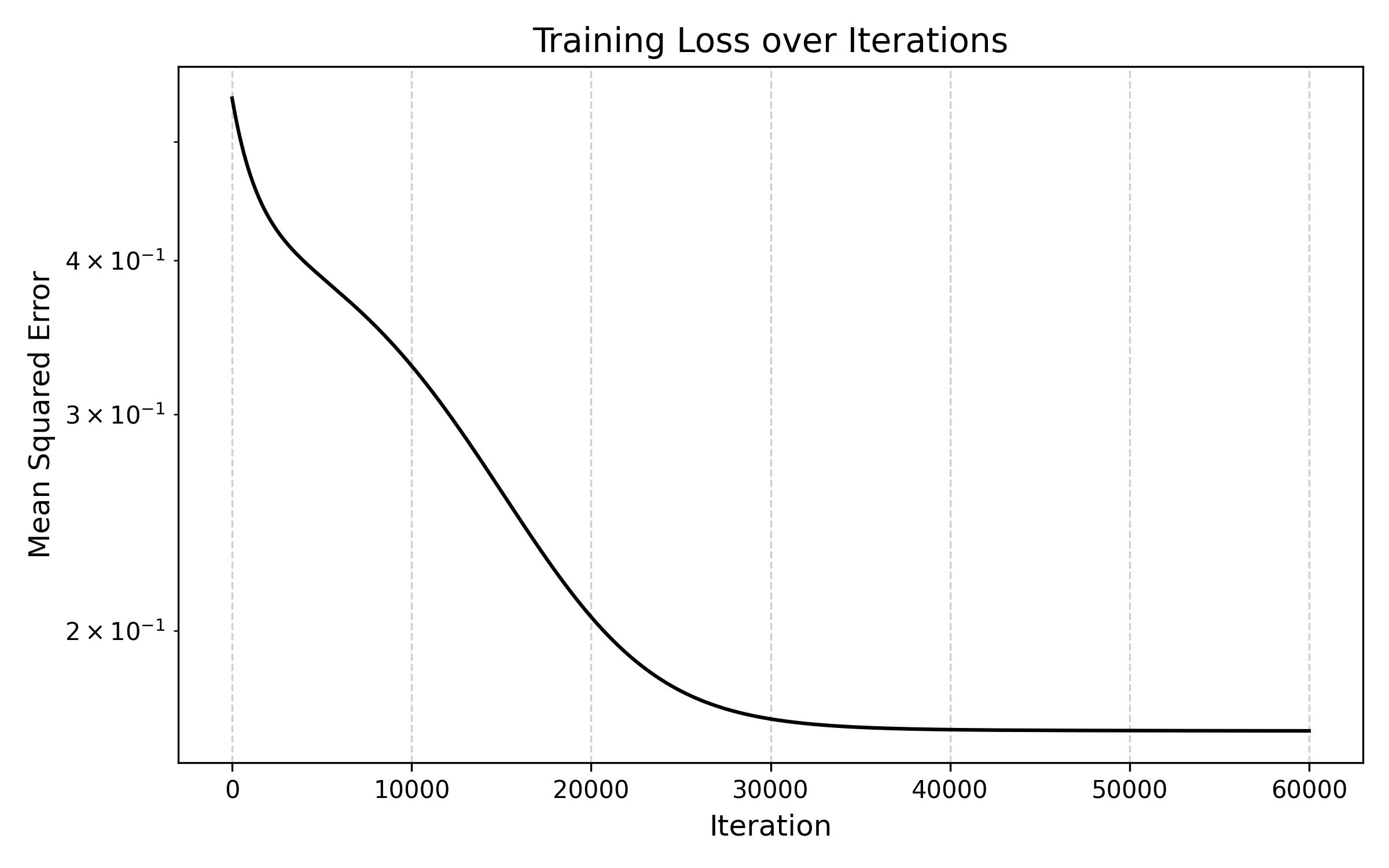}
    \captionof{figure}{Empirical mean-squared error for the recorded single-seed sphere experiment. The observed plateau is consistent with the analytic population optimum $1/6$.}
    \label{fig:loss}
\end{center}

\section{Conclusion}
\label{sec:conclusion}

The complex closure supplies equality certificates that transfer to real networks only in the nonmembership direction. Real exact rank may exceed complex rank, and the positive-semidefinite quadratic model is smaller than the signed-output architecture. A full real description may require inequalities. Likewise, border-rank membership is weaker than exact representability for $d\geq3$.

The model omits biases and depth. Adding either changes the parameterization and ambient polynomial class, so the Veronese secant description used here does not transfer without modification. The recorded benchmark is also agnostic of machine metadata; its timings are evidence about one recorded run, not portable performance claims or lower bounds for symbolic algorithms.

Taken together, the results give a simple hierarchy of certificates. The generic graph-ideal pipeline constructs equations of the border-rank locus; the target evaluator turns any violation into an exact architectural nonmembership certificate. Quadratic networks are exceptional because the bounded-rank image is closed, and their hidden orthogonal symmetry explains the parameter redundancy. In higher degree, the recovered binary-quartic and ternary-cubic catalecticant ideals agree with the known geometry. Finally, the sphere specialization shows how an algebraic obstruction combines with a distribution-induced norm to produce a quantitative loss floor.

{\footnotesize
\bibliographystyle{unsrt}
\bibliography{references}
}

\appendix
\section{Reproducibility}
\label{app:reproducibility}

This appendix records the implementation and derived data needed to reproduce the computational claims in the main text.

\paragraph{Executable pipeline.}

The general Macaulay2 construction, outlined in \cref{alg:implicitization}, is implemented in \texttt{src/fn\_power.m2}; \texttt{src/eval\_ideal.m2} exposes it for supplied $(n,r,d)$. The files \texttt{script/d2\_r2\_n2.m2} and \texttt{script/d2\_r2\_n3.m2} are fixed checks. The benchmark driver \texttt{script/expressivities.py} preserves the original grid by default and now records, when available, the Macaulay2 version, operating system, processor, logical CPU count, memory, term order, timeout, repository commit, and full architecture grid in a JSON metadata header.

\begin{algorithm}[H]
\caption{Implicitization of $\mathcal{V}_{n,d,r}$}
\label{alg:implicitization}
\begin{algorithmic}[1]
\Require integers $n,d,r\geq1$
\State $\mathcal{A}\gets\{\alpha\in\mathbb{N}^n:|\alpha|=d\}$
\State $R\gets\mathbb{Q}[\,v_{ij},c_\alpha : 1\leq i\leq r,\ 1\leq j\leq n,\ \alpha\in\mathcal{A}\,]$
\For{$\alpha\in\mathcal{A}$}
    \State $p_\alpha(v)\gets \binom{d}{\alpha}\sum_{i=1}^r v_i^\alpha$
\EndFor
\State $J\gets\langle c_\alpha-p_\alpha(v):\alpha\in\mathcal{A}\rangle\subset R$
\State $I\gets \operatorname{Elim}_{v}(J)=J\cap\mathbb{Q}[c]$
\State \Return $\operatorname{gens}(I)$
\end{algorithmic}
\end{algorithm}

The parser \path{script/analyze_benchmarks.py} reads both recorded logs and writes \path{certificate_atlas_120.csv}, \path{certificate_atlas_3600.csv}, and \path{cutoff_comparison.csv}. The exact ternary-cubic comparison is implemented in \path{script/validate_ternary_cubic.py}, with its ranks stored in \path{ternary_cubic_validation.json}. The target utility and its tests are \path{script/evaluate_certificates.py} and \path{script/test_certificate_evaluator.py}.

\paragraph{Nonzero atlas entries.}

\Cref{tab:nonzero-atlas} lists every completed architecture whose returned ideal is nonzero. The companion CSV also includes all 21 successful zero-ideal cases and all 67 timeouts. The recorded generator count is the number printed by Macaulay2; minimality was not tested.

\begin{center}
\centering
\captionof{table}{Nonzero-ideal entries in the 3600-second certificate atlas.}
\label{tab:nonzero-atlas}
\small
\begin{tabular}{@{}ccccccccc@{}}
\toprule
$(n,d,r)$ & $N$ & $rn$ & $K$ & Pred. dim. & Pred. codim. & Gens. & Deg. & Time (s) \\
\midrule
$(3,2,2)$ & 6  & 6  & 12 & 5  & 1  & 1   & 3 & 0.50 \\
$(4,2,2)$ & 10 & 8  & 18 & 7  & 3  & 10  & 3 & 0.52 \\
$(4,2,3)$ & 10 & 12 & 22 & 9  & 1  & 1   & 4 & 0.74 \\
$(5,2,2)$ & 15 & 10 & 25 & 9  & 6  & 50  & 3 & 1.03 \\
$(5,2,3)$ & 15 & 15 & 30 & 12 & 3  & 15  & 4 & 11.95 \\
$(5,2,4)$ & 15 & 20 & 35 & 14 & 1  & 1   & 5 & 202.66 \\
$(6,2,2)$ & 21 & 12 & 33 & 11 & 10 & 175 & 3 & 7.68 \\
$(6,2,3)$ & 21 & 18 & 39 & 15 & 6  & 105 & 4 & 1303.97 \\
$(3,3,2)$ & 10 & 6  & 16 & 6  & 4  & 20  & 3 & 1.21 \\
$(2,4,2)$ & 5  & 4  & 9  & 4  & 1  & 1   & 3 & 0.49 \\
$(3,4,2)$ & 15 & 6  & 21 & 6  & 9  & 148 & 3 & 38.21 \\
$(2,5,2)$ & 6  & 4  & 10 & 4  & 2  & 4   & 3 & 0.50 \\
\bottomrule
\end{tabular}
\end{center}

\paragraph{Cutoff comparison.}

The 120-second experiment contains 135 records but only 108 distinct architectures: its full 27-case $d=3$ block appears twice. The parser verifies that the duplicate records agree in status and symbolic output, collapses them by $(n,d,r)$, and records the multiplicity. Of the 60 distinct architectures shared with the one-hour grid, 58 have the same completion status. The longer log additionally completes $(5,2,4)$ in $202.66$ seconds and $(6,2,3)$ in $1303.97$ seconds. This is consistent with a cutoff effect, but the missing environment metadata prevents a controlled hardware-level timing comparison.

\paragraph{Sphere experiment.}

The supplementary file \texttt{script/sphere\_pred.py} contains the single-seed PyTorch experiment described in \cref{sec:optimization}. Samples are normalized Gaussian vectors, so they are uniform on $S^2$. The model parameter is $V\in\mathbb{R}^{2\times3}$ and the forward pass is $x\mapsto\sum_{i=1}^2(v_i^\top x)^2$. The figure reports empirical training loss, whereas Proposition~\ref{prop:sphere} concerns population loss. No multi-seed output is present in the available source code.

\end{document}